\theoremstyle{definition}
\newtheorem{definition}{Definition}
\theoremstyle{plain}
\newtheorem{theorem}[definition]{Theorem}
\newtheorem{proposition}[definition]{Proposition}
\newtheorem*{claim}{Claim}
\newcommand{\suchthat}{:}
\newcommand{\from}{\colon}
\renewcommand{\epsilon}{\varepsilon}
\newcommand{\N}{\mathbb{N}}
\newcommand{\symdiff}{\mathrel{\triangle}}
\newcommand{\R}{\mathbb{R}}
\DeclareMathOperator{\cost}{cost}
\begin{document}
  \title[Unfriendly colorings]{Unfriendly colorings of graphs with finite average degree}

  \author{Clinton T.~Conley}
  \address[C.T.~Conley]{Carnegie Mellon University.}
  \author{Omer Tamuz}
  \address[O.~Tamuz]{California Institute of Technology.}
%  \date{}
\thanks{Clinton T.~Conley was supported by NSF grant DMS-1500906. Omer Tamuz was supported by a grant from the Simons Foundation (\#419427).}

  \maketitle

\begin{abstract}
    In an unfriendly coloring of a graph the color of every node mismatches that of the majority of its neighbors. We show that every probability measure preserving Borel graph with finite average degree admits a Borel unfriendly coloring almost everywhere. We also show that every bounded degree Borel graph of subexponential growth admits a Borel unfriendly coloring.
\end{abstract}

\section{Introduction}

Suppose that $G$ is a locally finite graph on the vertex set $X$.  We say that $c \from X \to 2$ is an \emph{unfriendly coloring} of $G$ if for all $x \in X$ at least half of $x$'s neighbors receive a different color than $x$ does.  More formally, letting $G_x$ denote the set of $G$-neighbors of $x$, such a function $c$ is an unfriendly coloring if $|\{y \in G_x \suchthat c(x) \neq c(y)\}| \geq |\{y \in G_x \suchthat c(x) = c(y)\}|$. By a compactness argument unfriendly colorings exist for all locally finite graphs (see, e.g.,~\cite{aharoni1990unfriendly}). There exist graphs with uncountable vertex sets that have no unfriendly colorings \cite{shelah1990graphs};  it is not known if this is possible for graphs with countably many vertices.

A large and growing literature considers measure-theoretical analogues of classical combinatorial questions (see, e.g., a survey by Kechris and Marks \cite{kechris2015descriptive}). 
Following~\cite{conley2014measure}, we consider a measure-theoretical analogue of the question of unfriendly colorings. Suppose that $G$ is a locally finite Borel graph on the standard Borel space $X$, and that $\mu$ is a Borel probability measure on $X$.  We say that $G$ is \emph{$\mu$-preserving} if there are countably many $\mu$-preserving Borel involutions whose graphs cover the edges of $G$.  Equivalently, $G$ is $\mu$-preserving if its connectedness relation $E_G$ is a $\mu$-preserving equivalence relation.

An important example of such graphs comes from probability measure preserving actions of finitely generated groups. Indeed let  a group, generated by the finite symmetric set $S$, act by measure preserving transformations on a standard Borel probability space $(X,\mu)$. Then the associated graph $G=(X,E)$ whose edges are
$$
 E = \{(x,y) \,:\, y=s x\text{ for some } s \in S\}
$$ 
is a $\mu$-preserving graph.

In \cite{conley2014measure} it is shown that every free probability measure preserving action of a finitely generated group is weakly equivalent to another such action whose associated graph admits an unfriendly coloring. Note that such graphs are regular: (almost) every node has degree $|G_x|=|S|$. Recall that the ($\mu$-)\emph{cost} of a $\mu$-preserving locally finite Borel graph $G$ is simply half its average degree: $\cost(G) = \frac{1}{2} \int_X |G_x| \ d\mu$.  Equivalently, using the Lusin-Novikov uniformization theorem (see, e.g., \cite[Lemma 18.12]{kechris2012classical}) one may circumvent this factor of $\frac{1}{2}$ by instead computing $\int_X |\vec{G}_x|d\mu$, where $\vec{G}$ is an arbitrary measurable orientation of $G$.

Our first result shows that every measure  preserving graph with finite cost admits an (almost everywhere) unfriendly coloring.

\begin{theorem}\label{thm:invariant}
  Suppose that $(X,\mu)$ is a standard probability space and that $G$ is a $\mu$-preserving locally finite Borel graph on $X$ with finite cost.  Then there is a $\mu$-conull $G$-invariant Borel set $A$ such that $G\restriction A$ admits a Borel unfriendly coloring.
\end{theorem}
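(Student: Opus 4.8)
The plan is to run a ``flip the unhappy vertices'' dynamics and show it converges almost everywhere to an unfriendly coloring. Call a vertex $x$ \emph{unhappy} under a coloring $c\from X\to 2$ if strictly more than half of its neighbors agree with it, i.e.\ $\mathrm{mono}_c(x):=|\{y\in G_x\suchthat c(y)=c(x)\}|>\tfrac12|G_x|$; thus $c$ is unfriendly exactly when no vertex is unhappy. The monovariant driving the argument is the integrated monochromatic degree $\Phi(c):=\int_X\mathrm{mono}_c(x)\,d\mu$, which takes values in $[0,2\cost(G)]$ and is finite precisely because $G$ has finite cost. Note also that all the objects below ($\mathrm{mono}_c$, the set of unhappy vertices, etc.) are Borel, using the Lusin--Novikov theorem to enumerate neighbors measurably.

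First I would prove a one-step decrease estimate: if $B$ is a Borel $G$-independent set of unhappy vertices and $c'$ is obtained from $c$ by toggling the color of every vertex of $B$, then $\Phi(c')\le\Phi(c)-2\mu(B)$. Indeed the integrand $\mathrm{mono}_{c'}-\mathrm{mono}_c$ is supported on $B\cup\partial B$, where $\partial B$ is the set of $G$-neighbors of $B$. On $B$, independence forces each change to be $|G_x|-2\,\mathrm{mono}_c(x)=-(2\,\mathrm{mono}_c(x)-|G_x|)\le -1$ (the quantity is a positive integer since $x$ is unhappy). On $\partial B$ the change can be positive, but its total $\mu$-mass is controlled by the mass transport principle --- available since $E_G$ is $\mu$-preserving --- which equates the mass of monochromatic $B$-to-$\partial B$ edges counted from each side and shows the boundary contribution equals exactly $-\int_B(2\,\mathrm{mono}_c(x)-|G_x|)\,d\mu$ as well; hence $\Phi(c')-\Phi(c)=-2\int_B(2\,\mathrm{mono}_c(x)-|G_x|)\,d\mu\le -2\mu(B)$. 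This computation is the heart of the argument and the one place where finite cost and measure-preservation are both essential.

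Next I would set up the iteration. Fix any Borel $c_0\from X\to 2$ (say $c_0\equiv 0$, so $\Phi(c_0)=2\cost(G)$). Given $c_n$, let $U_n$ be the Borel set of vertices unhappy under $c_n$; the induced graph $G\restriction U_n$ is locally finite Borel, hence carries a Borel maximal independent set $B_n$, obtained by a standard greedy construction along a Borel $\aleph_0$-coloring of a locally countable Borel graph (see \cite{kechris2015descriptive}). Let $c_{n+1}$ be $c_n$ with the colors on $B_n$ toggled. By the decrease estimate, $2\sum_n\mu(B_n)\le\Phi(c_0)\le 2\cost(G)<\infty$, so by Borel--Cantelli $\mu$-almost every $x$ lies in only finitely many $B_n$, whence $c_n(x)$ stabilizes; let $c_\infty$ be the resulting (Borel) limit. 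Let $N$ be the null set on which $c_n$ fails to stabilize; since $E_G$ is $\mu$-preserving its saturation $[N]_{E_G}$ is again null, so $A:=X\setminus[N]_{E_G}$ is a $\mu$-conull $G$-invariant Borel set.

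Finally I would verify that $c_\infty\restriction A$ is an unfriendly coloring of $G\restriction A$. Fix $x\in A$; then $x$ and its finitely many neighbors all lie in $A$, so their colors are all stabilized by some stage $n_x$. If $x$ were unhappy under $c_\infty$, then for every $n\ge n_x$ we have $x\in U_n$, so maximality of $B_n$ forces either $x\in B_n$ or some neighbor of $x$ to lie in $B_n$; in either case a color inside $\{x\}\cup G_x$ changes between stages $n$ and $n+1$, contradicting stabilization. Hence no vertex of $A$ is unhappy, which is exactly the assertion that $c_\infty\restriction A$ is unfriendly. I expect the principal obstacle to be the one-step decrease estimate --- getting the nearest-neighbor ($\partial B$) terms to cancel correctly against the $B$ terms via mass transport --- together with the bookkeeping needed to keep $U_n$, $B_n$, and $c_\infty$ genuinely Borel when $G$ is merely locally finite rather than of bounded degree.
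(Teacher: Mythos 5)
Your proposal is correct and is essentially the paper's own argument: your potential $\Phi(c)$ is exactly twice the cost of the monochromatic subgraph $G_n$ used there, your one-step estimate $\Phi(c_{n+1})\le\Phi(c_n)-2\mu(B_n)$ is the paper's claim $\cost(G_n)-\cost(G_{n+1})\ge\mu(B_n)$ (your mass-transport cancellation on $\partial B$ is the same double counting, just phrased edge-by-edge), and the telescoping, Borel--Cantelli, saturation, and final stabilization argument are identical. The only deviation is that you flip a maximal Borel independent subset of the currently unhappy vertices at each stage, whereas the paper fixes in advance a repetitive sequence of independent sets $(X_n)$ covering each vertex infinitely often and flips the unhappy vertices inside $X_n$ at stage $n$; both work here, but the paper's choice makes the flip dynamics independent of $\mu$, which it exploits in the later quasi-invariant and subexponential-growth results.
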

We next explore how the invariance assumption can be weakened. Recall that a Borel probability measure is $G$-\emph{quasi-invariant} if the $G$-saturation of every $\mu$-null set remains $\mu$-null.  Such measures admit a \emph{Radon-Nikodym} cocycle $\rho \from G \to \R^+$ so that whenever $A \subseteq X$ is Borel and $f \from A \to X$ a Borel partial injection whose graph is contained in $G$, then $\mu(f[A]) = \int_A \rho(x,f(x))\ d\mu$.

\begin{theorem}\label{thm:quasiinvariant}
  Suppose that $(X,\mu)$ is a standard probability space, that $G$ is a  Borel graph on $X$ with bounded degree $d$, and that $\mu$ is $G$-quasi-invariant, with corresponding Radon-Nikodym cocycle $\rho$. Suppose also that for all $(x,y) \in G$, $1-\frac{1}{d} \leq \rho(x,y) \leq 1+\frac{1}{d}$. Then there is a $\mu$-conull $G$-invariant Borel set $A$ such that $G\restriction A$ admits a Borel unfriendly coloring.
\end{theorem}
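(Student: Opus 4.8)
The plan is to carry out, in a Borel and measurable way, the classical ``local search'' proof that finite graphs have unfriendly colorings: start from any coloring, repeatedly recolor the vertices that violate the unfriendliness condition, and pass to a limit, using the number of monochromatic edges as a monovariant. Since $G$ has bounded degree, fix a Borel proper edge coloring of $G$ with finitely many colors; this exhibits $G$ as a disjoint union $\bigsqcup_{j} \graph(\phi_j)$ of the graphs of finitely many fixed-point-free Borel involutions $\phi_j$ defined on Borel domains. For a Borel coloring $c \from X \to 2$ let $U(c)$ be the (Borel) set of vertices at which $c$ fails the unfriendliness condition, and set
\[
  \Phi(c) = \int_X \bigl|\{\, y \in G_x \suchthat c(x) = c(y) \,\}\bigr| \, d\mu(x) \in [0,d].
\]

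Next I would build Borel colorings $c_0, c_1, \dots$ with $c_0 \equiv 0$ as follows. The graph $G \restriction U(c_n)$ has degree $\le d$, hence admits a Borel proper $(d+1)$-coloring; let $I_n \subseteq U(c_n)$ be a color class of maximal measure, so $I_n$ is Borel, $G$-independent, contained in $U(c_n)$, with $\mu(I_n) \ge \mu(U(c_n))/(d+1)$. Let $c_{n+1}$ be $c_n$ flipped on $I_n$. The crucial claim is $\Phi(c_{n+1}) \le \Phi(c_n) - \mu(I_n)$. To prove it, split $\Phi(c_{n+1}) - \Phi(c_n)$ into the contribution of the flipped vertices $I_n$ and that of the unflipped ones. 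At $x \in I_n$ all neighbors keep their color, so $x$'s contribution is $\bigl|\{y \in G_x \suchthat c_n(x) \neq c_n(y)\}\bigr| - \bigl|\{y \in G_x \suchthat c_n(x) = c_n(y)\}\bigr| \le -1$ because $x \in U(c_n)$. For an unflipped vertex only its neighbors inside $I_n$ changed color; decomposing the corresponding sum along the involutions $\phi_j$, using the Radon--Nikodym cocycle to change variables (each $\phi_j$ is an involution, and ``agreement of colors'' is symmetric), and re-indexing each affected edge by its endpoint in $I_n$, the total contribution of the unflipped vertices equals $\int_{I_n}\bigl(\sum_{y' \sim y,\ c_n(y')\ne c_n(y)}\rho(y,y') - \sum_{y'\sim y,\ c_n(y') = c_n(y)}\rho(y,y')\bigr)\,d\mu(y)$. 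If $y \in U(c_n)$ has $p$ monochromatic and $q$ bichromatic incident edges, then $p > q$, and the integrand is at most $q\bigl(1+\tfrac1d\bigr) - p\bigl(1-\tfrac1d\bigr) = (q-p) + \tfrac{p+q}{d} \le -1 + \tfrac{|G_y|}{d} \le 0$, using $|G_y|\le d$; this is exactly where the hypothesis $1-\tfrac1d \le \rho \le 1+\tfrac1d$ enters. Combining the two contributions gives the claim.

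Telescoping yields $\sum_n \mu(I_n) \le \Phi(c_0) \le d < \infty$, and hence $\sum_n \mu(U(c_n)) \le (d+1) \sum_n \mu(I_n) < \infty$. By Borel--Cantelli, $\mu$-a.e.\ $x$ lies in only finitely many $I_n$, so $c_n(x)$ is eventually constant; let $c_\infty(x)$ be this eventual value (set $c_\infty = 0$ on the exceptional $\mu$-null Borel set $Z$), a Borel coloring, and note $\mu(\limsup_n U(c_n)) = 0$. Since $\mu$ is $G$-quasi-invariant, the $G$-saturation $Z'$ of $Z$ is $\mu$-null; and if $x \notin Z'$, then $c_n \to c_\infty$ on the whole countable $G$-component of $x$, hence $c_n$ and $c_\infty$ agree on the finite set $\{x\}\cup G_x$ for all large $n$, so $x$ fails unfriendliness for $c_\infty$ only if $x \in U(c_n)$ for all large $n$. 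Thus the set $B$ of vertices at which $c_\infty$ fails unfriendliness is contained in $Z' \cup \limsup_n U(c_n)$ and so is $\mu$-null; letting $A$ be the complement of its ($\mu$-null, again by quasi-invariance) $G$-saturation, $A$ is a $\mu$-conull $G$-invariant Borel set, and $c_\infty \restriction A$ is a Borel unfriendly coloring of $G \restriction A$.

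The whole argument parallels the measure-preserving case of Theorem~\ref{thm:invariant}; the one step that needs genuinely new input is the monovariant estimate. For $G$-invariant $\mu$ the unflipped vertices contribute exactly $-\int_{I_n}(p-q)\,d\mu < 0$, but for a quasi-invariant $\mu$ the Radon--Nikodym weights could a priori push that term positive, and keeping it non-positive is precisely what forces the sharp hypothesis $|\rho - 1| \le 1/d$; everything else --- the Borel edge and vertex colorings, the Borel--Cantelli step, and the passage from a $\mu$-null ``bad'' set to a conull invariant set via quasi-invariance --- is routine.
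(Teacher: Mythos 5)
Your proof is correct and follows essentially the same route as the paper's: your potential $\Phi(c_n)$ is exactly the paper's measure $M(G_n)$ of the monochromatic subgraph, and your change-of-variables computation for the unflipped vertices is precisely the paper's ``each edge is counted between $2-\frac{1}{d}$ and $2+\frac{1}{d}$ times'' estimate, leading to the same telescoping and Borel--Cantelli conclusion. The only difference is scheduling: the paper flips the unhappy vertices lying in a fixed repetitive sequence of independent sets $(X_n)$, whereas you greedily flip a largest-measure color class $I_n$ of the currently unhappy set $U(c_n)$ and use $\mu(I_n)\geq\mu(U(c_n))/(d+1)$ to control $\sum_n\mu(U(c_n))$ directly; both variants work.
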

The proofs of Theorems~\ref{thm:invariant} and~\ref{thm:quasiinvariant} build on a potential function technique used in~\cite{tamuz2015majority} (see also \cite{benjamini2016convergence}) to study majority dynamics on infinite graphs; in the context of finite graphs, these techniques go back to Goles and Olivos \cite{goles1980periodic}. Indeed, we show that in our settings (anti)-majority dynamics converge to an unfriendly coloring. The combinatorial nature of this technique allows us to extend our results to the Borel setting. 
\begin{theorem}
\label{thm:subexp}
  Suppose that $G$ is a bounded-degree Borel graph of subexponential growth.  Then $G$ admits a Borel unfriendly coloring.
\end{theorem}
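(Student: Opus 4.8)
The plan is to run a Borel version of ``anti-majority dynamics'' --- the same potential-function technique behind Theorems~\ref{thm:invariant} and~\ref{thm:quasiinvariant} --- and to show that, thanks to subexponential growth, it converges pointwise to an unfriendly coloring. Say $G$ has degree bounded by $d$. By the Kechris--Solecki--Todorcevic theorem $G$ has a Borel proper vertex coloring $\pi \from X \to \{0,1,\dots,d\}$, which I will use to schedule updates. Fix any Borel coloring $c_0 \from X \to 2$ and define $c_{t+1}$ from $c_t$ as follows: if $\pi(x) \equiv t \pmod{d+1}$ and $x$ is \emph{unhappy} under $c_t$ --- i.e.\ strictly more than half of its $G$-neighbors agree with it --- set $c_{t+1}(x) = 1-c_t(x)$; otherwise set $c_{t+1}(x) = c_t(x)$. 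Using Lusin--Novikov to enumerate neighborhoods in a Borel way (so that the update rule is Borel), each $c_t$ is Borel. Since vertices with a common $\pi$-color are pairwise non-adjacent, the set of vertices flipped at any given step is $G$-independent. If I can prove that for every $x$ the sequence $t \mapsto c_t(x)$ is eventually constant, then $c_\infty := \limsup_t c_t$ is a Borel coloring of all of $X$, and it is unfriendly: were some $x$ unhappy under $c_\infty$, then --- happiness depending only on the colors of the finite set $\{x\}\cup G_x$, which have stabilized --- $x$ would be unhappy under $c_t$ for all large $t$, hence flipped the next time its $\pi$-color is scheduled, contradicting stabilization. Note this yields the coloring on all of $G$, not merely off a null set, as the statement requires.

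Everything thus reduces to stabilization at each vertex, and this is where subexponential growth enters. Fix $\lambda \in \bigl(\tfrac{d-1}{d+1},\,1\bigr)$ and, for $x \in X$, consider the weighted potential
\[
  \Psi_x(t) \;=\!\! \sum_{\substack{e\in E(G)\\ e \text{ monochromatic under } c_t}}\!\! \lambda^{\,\partial(e,x)},\qquad \partial(e,x):=\min\{d_G(w,x): w \text{ an endpoint of } e\}.
\]
Since there are at most $d\,|B_r(x)|$ edges $e$ with $\partial(e,x)=r$ (such an edge meets the sphere of radius $r$ about $x$), $\Psi_x(0)\le d\sum_{r\ge 0}\lambda^r|B_r(x)|$, and subexponential growth --- $|B_r(x)|^{1/r}\to 1$ --- makes this series converge; this is exactly the step that fails for merely bounded-degree graphs, since the admissible $\lambda$ must lie within $O(1/d)$ of $1$. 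I claim that flipping an unhappy vertex $v$ decreases $\Psi_x$ by at least $\delta_0\lambda^{\partial(v,x)}$, where $\delta_0=\delta_0(d,\lambda)>0$. Only the edges at $v$ change status; those joining $v$ to the sphere of radius $\partial(v,x)-1$ carry weight $\lambda^{\partial(v,x)-1}$ and the rest carry weight $\lambda^{\partial(v,x)}$, so the decrease equals $\lambda^{\partial(v,x)-1}(A+\lambda B)$, where $A$ (resp.\ $B$) is the number of agreeing minus disagreeing edges of $v$ of the first (resp.\ second) kind. Unhappiness gives $A+B\ge 1$, and optimizing under the degree constraint (the worst case being $A+B=1$ with $B$ as large as possible, namely $B\le\tfrac{d+1}{2}$) yields $A+\lambda B\ge 1-(1-\lambda)\tfrac{d+1}{2}=:\delta_0>0$ by the choice of $\lambda$; the center $x$, having no inward edges, is handled directly and gives a decrease $\ge 1$. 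Because the flipped vertices at each step are independent, the step-to-step change of $\Psi_x$ is the sum of these individual decreases, so $\Psi_x$ is non-increasing and
\[
  \delta_0\sum_{v\in X}\bigl(\#\text{ flips of }v\bigr)\,\lambda^{\,\partial(v,x)}\;\le\;\Psi_x(0)\;<\;\infty .
\]
In particular $x$ is flipped only finitely often, as needed.

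The main obstacle is the sign computation showing that a flip of an unhappy vertex actually decreases $\Psi_x$. The unweighted monochromatic-edge count decreases at every flip but is typically infinite; a weighted count can, at a single vertex, \emph{increase}, precisely when the vertex's edges that do not lead closer to $x$ are monochromatic while its edges that do lead closer to $x$ are not. The saving feature is that the weight jumps by only a single factor $\lambda$ between consecutive spheres, so taking $\lambda$ close enough to $1$ --- how close depending only on $d$ --- renders this discrepancy harmless, at which point subexponential growth is exactly what keeps $\Psi_x(0)$ finite for such a $\lambda$. The remaining points --- Borel measurability of the recursively defined $c_t$, the independent-set bookkeeping, and the separate treatment of the center $x$ --- I expect to be routine.
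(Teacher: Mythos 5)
Your proof is correct and is essentially the paper's own argument with the measure-theoretic wrapper removed: the paper constructs, for each $x$, the quasi-invariant atomic measure $\mu_x(\{y\}) \propto (1+\tfrac{1}{d})^{-\delta(x,y)}$ and invokes Proposition~\ref{prop:quasiinv}, and your potential $\Psi_x(t)$ coincides up to bounded factors with the resulting weighted monochromatic-edge count $M(G_t)$ (i.e.\ the choice $\lambda=\tfrac{d}{d+1}$, which lies in your admissible interval $(\tfrac{d-1}{d+1},1)$). Your per-flip decrease estimate (correct, including the worst case $A+B=1$, $B\le\tfrac{d+1}{2}$), the use of subexponential growth only to make $\Psi_x(0)$ finite, and the stabilization-implies-unfriendly endgame all match the paper's proof; the only difference is that you verify the decrease by hand rather than quoting the Radon--Nikodym cocycle bound.
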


A natural question remains open: is there a locally finite Borel graph that does not admit a Borel unfriendly coloring? To the best of our knowledge this is not known, even with regards to the restricted class of bounded degree graphs. In contrast, Theorem~\ref{thm:invariant} shows that for this class unfriendly colorings exist in the measure preserving case. Still, we do not know if the finite cost assumption in Theorem~\ref{thm:invariant} is necessary, or whether every locally finite measure preserving graph admits an almost everywhere unfriendly coloring.

\section{Proofs}
\begin{proof}[Proof of Theorem~\ref{thm:invariant}]
  By Kechris-Solecki-Todorcevic \cite[Proposition 4.5]{kechris1999borel}, there exists a \emph{repetitive sequence of independent sets}: a sequence $(X_n)_{n \in \N}$ of $G$-independent Borel sets so that each $x \in X$ is in infinitely many $X_n$.  We will recursively build for each $n \in \N$ a Borel function $c_n \from X \to 2$ which converge $\mu$-almost everywhere to an unfriendly coloring of $G$.

  The choice of $c_0$ is arbitrary, but we may as well declare it to be the constant $0$ function.

  Suppose now that $c_n$ has been defined.  We build $c_{n+1}$ by ``flipping'' the color of vertices in $X_n$ with too many neighbors of the same color, and leaving everything else unchanged.  More precisely, $c_{n+1}(x) = 1-c_n(x)$ if $x \in X_n$ and $|\{y \in G_x \suchthat c_n(x) \neq c_n(y)\}| < |\{y \in G_x \suchthat c_n(x) = c_n(y)\}|$; otherwise, $c_{n+1}(x) = c_n(x)$.

  To show that this sequence $c_n$ converges $\mu$-a.e.~to an unfriendly coloring, we introduce some auxiliary graphs.  Let $G_n$ be the subgraph of $G$ containing exactly those edges between vertices of the same $c_n$-color, so $x \mathrel{G_n} y$ iff $x \mathrel{G} y$ and $c_n(x) = c_n(y)$.  Certainly for all $n \in \N$, $\cost(G_n) \leq \cost(G)$.

  For $n \in \N$, let $B_n = \{x \in X \suchthat c_n(x) \neq c_{n+1}(x)\}$.

  \begin{claim}
    $\cost(G_n) - \cost(G_{n+1}) \geq \mu(B_n)$.
  \end{claim}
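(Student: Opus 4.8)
The plan is to interpret $\cost(G_n)$ as a (suitably weighted) count of the $c_n$-monochromatic edges of $G$ and to track how this count changes in passing from $c_n$ to $c_{n+1}$. The guiding intuition: a vertex $x$ flips colour (i.e.\ $x \in B_n$) exactly when a strict majority of its $G$-neighbours currently share its colour, so after the flip a strict majority disagree with it; thus the flip destroys at least one more $c_n$-monochromatic edge at $x$ than it creates. Since $B_n \subseteq X_n$ and $X_n$ is $G$-independent, no two flipping vertices are adjacent, so these local savings occur along disjoint edge sets and should simply add up, yielding a total decrease of at least $\mu(B_n)$ in the number of monochromatic edges.

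To carry this out I would first note that, writing $a_m(x) = |\{y \in G_x \suchthat c_m(x) = c_m(y)\}|$, each $G_m$ is a locally finite $\mu$-preserving subgraph of $G$ with $|(G_m)_x| = a_m(x)$, hence $2\cost(G_m) = \int_X a_m\, d\mu$, which is finite since $G_m \subseteq G$. Next I would observe that an edge $\{x,y\} \in G$ changes $c$-monochromaticity between times $n$ and $n+1$ if and only if exactly one of $x,y$ lies in $B_n$ (both is impossible by independence of $X_n$). This lets me write $a_n(x) - a_{n+1}(x) = \sum_{y \in G_x} g_1(x,y) + \sum_{y \in G_x} g_2(x,y)$, where $g_1(x,y)$ is $0$ unless $x \mathrel G y$, $x \in B_n$, $y \notin B_n$, in which case it is $+1$ if $c_n(x) = c_n(y)$ and $-1$ otherwise, and $g_2$ is defined symmetrically with the roles of the two vertices exchanged, so $g_2(x,y) = g_1(y,x)$. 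For $x \in B_n$ every neighbour lies outside $X_n \supseteq B_n$, so $\sum_{y} g_1(x,y) = a_n(x) - (|G_x| - a_n(x)) = 2a_n(x) - |G_x|$, and the flipping condition $a_n(x) > |G_x| - a_n(x)$ forces this quantity to be $\geq 1$ over the integers; integrating, $\int_X \sum_y g_1\, d\mu = \int_{B_n} (2a_n - |G_x|)\, d\mu \geq \mu(B_n)$.

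The remaining step — which I expect to be the main obstacle, since it is the only genuinely measure-theoretic point and the only place the hypotheses on $\mu$ are used — is to show that the $g_2$-term contributes exactly as much as the $g_1$-term; equivalently, to control the non-flipping vertices, whose monochromatic degree can in fact \emph{increase}, by a compensating quantity one can only see on average. Here one invokes $\mu$-preservation directly: fixing countably many $\mu$-preserving Borel involutions whose graphs cover $G$, one obtains the mass transport identity $\int_X \sum_{y} F(x,y)\, d\mu = \int_X \sum_{y} F(y,x)\, d\mu$ for Borel $F$ on $G$ whose positive and negative parts are integrable. Since $|g_1| \leq 1$ and $\sum_y |g_1(x,\cdot)| \leq |G_x| \in L^1(\mu)$ by finiteness of $\cost(G)$, this applies to $F = g_1$ and gives $\int_X \sum_y g_2\, d\mu = \int_X \sum_y g_1(y,x)\, d\mu = \int_X \sum_y g_1\, d\mu$. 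Combining the pieces, $2\big(\cost(G_n) - \cost(G_{n+1})\big) = \int_X (a_n - a_{n+1})\, d\mu = 2\int_X \sum_y g_1\, d\mu \geq 2\mu(B_n)$, which is the claim. The only care needed beyond this is to state the mass transport principle in the form actually used (signed, $L^1$) and to double-check the integrability bound that makes it applicable.
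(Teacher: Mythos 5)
Your proof is correct and takes essentially the same route as the paper: every edge that changes monochromaticity has exactly one endpoint in the ($G$-independent) set $B_n$, the net change counted from that endpoint is at least $1$ per flipped vertex, and measure-preservation identifies the count from the $B_n$-side with the count from the other side. The only difference is that you make explicit, via a signed $L^1$ mass-transport identity, the ``double counting'' step that the paper compresses into ``The claim follows'' (and alludes to again in its remark before Proposition~\ref{prop:quasiinv}); your integrability check using $\sum_y |g_1(x,y)| \leq |G_x|$ and finiteness of $\cost(G)$ is exactly what justifies it.
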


  \begin{proof}[Proof of the claim]
    Recall that, by the definition of $c_{n+1}$, $x \in B_n$ iff $x \in X_n$ and $|\{y \in G_x \suchthat c_n(x) \neq c_n(y)\}| < |\{y \in G_x \suchthat c_n(x) = c_n(y)\}|$.  In particular, $B_n \subseteq X_n$ and hence is $G$-independent.  Thus $G_{n+1} = G_n \symdiff \{(x,y) \suchthat x \mathrel G y$ and $\{x,y\} \cap B_n \neq \emptyset\}$.  But for each $x \in B_n$, the above characterization of membership in $B_n$ ensures that its $G_{n+1}$-degree is strictly smaller than its $G_n$-degree.  The claim follows.
  \end{proof}

  In particular, since the sum telescopes we see $\sum_{n \in \N} \mu(B_n) \leq \cost(G) < \infty$.  Hence the set $C = \{x \in X \suchthat x \in B_n \mbox{ for infinitely many n}\}$ is $\mu$-null by the Borel-Cantelli lemma.  Let $A = X \setminus [C]_G$, so $A$ is $\mu$-conull.

  \begin{claim}
    $c$ is an unfriendly coloring of $G \restriction A$.
  \end{claim}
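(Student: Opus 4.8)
The plan is to let $c := \lim_n c_n$. This limit exists pointwise on $X \setminus C$: if $x \notin C$, then $x \in B_n$ for only finitely many $n$, so the sequence $(c_n(x))_n$ is eventually constant, and we take $c(x)$ to be its eventual value (on $C$ we may define $c$ arbitrarily, say identically $0$, since it will play no role on $A$). Being a pointwise limit of Borel functions on a Borel set, $c$ is Borel.

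Now fix $x \in A = X \setminus [C]_G$; the goal is the inequality $|\{y \in G_x \suchthat c(x) \neq c(y)\}| \geq |\{y \in G_x \suchthat c(x) = c(y)\}|$. Note first that since $A$ is $G$-invariant, the $(G\restriction A)$-neighbors of $x$ are exactly $G_x$, so this inequality is indeed the unfriendly condition at $x$ within $G \restriction A$. Note second that since $x \notin [C]_G$, neither $x$ nor any member of $G_x$ lies in $C$, so $c$ is defined at $x$ and at each of its neighbors. Because $G$ is locally finite, $G_x$ is finite, so we may pick $N \in \N$ with $c_n(x) = c(x)$ and $c_n(y) = c(y)$ for all $y \in G_x$ and all $n \geq N$.

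To conclude, I would use the repetitivity of $(X_n)_{n \in \N}$ to choose some $n \geq N$ with $x \in X_n$. Since $c_n(x) = c(x) = c_{n+1}(x)$ (both $n$ and $n+1$ are at least $N$), we have $x \notin B_n$, i.e.\ $x$ is not flipped at stage $n$; as $x \in X_n$, the recursive definition of $c_{n+1}$ then forces $|\{y \in G_x \suchthat c_n(x) \neq c_n(y)\}| \geq |\{y \in G_x \suchthat c_n(x) = c_n(y)\}|$. Since $c_n$ agrees with $c$ on $\{x\} \cup G_x$, the same inequality holds for $c$, which is exactly what was wanted.

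The argument has no genuinely difficult step; the only point requiring care is that the stabilization stage $N$ can be taken uniform over the entire finite set $\{x\} \cup G_x$ — this is where local finiteness, together with the $G$-invariance of $A$ (which guarantees that the neighbors of $x$ also avoid $C$), is used.
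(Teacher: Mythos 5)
Your proposal is correct and follows essentially the same argument as the paper: pass to a stage $n$ beyond stabilization of $c$ on $\{x\} \cup G_x$ with $x \in X_n$, observe that $c_{n+1}(x) = c_n(x)$ forces the unfriendly inequality at stage $n$, and transfer it to $c$. The only difference is that you spell out the (implicit) definition of $c$ as the pointwise limit and the role of $G$-invariance of $A$, which the paper leaves to the reader.
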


  \begin{proof}[Proof of the claim]
    Fix $x \in A$ and fix $k\in \N$ sufficiently large so that $c_n$ has stabilized for $x$ and all its (finitely many) neighbors beyond $k$.  Fix $n > k$ so that $x \in X_n$.  Since $c_n(x) = c_{n+1}(x)$, the definition of $c_{n+1}$ implies that $|\{y \in G_x \suchthat c_n(x) \neq c_n(y)\}| \geq |\{y \in G_x \suchthat c_n(x) = c_n(y)\}|$.  But $c_n = c$ on $G_x \cup \{x\}$, and hence $c$ is unfriendly as desired.
  \end{proof}

  This completes the proof of the theorem.
\end{proof}

We next analyze the extent to which the measure-theoretic hypotheses may be weakened in this argument.  Note that the sequence $c_n$ of colorings is defined without using the measure at all (in fact it is determined by the graph $G$ and the sequence $(X_n)$ of independent sets); the measure only appears in the argument that sequence converges to a limit coloring.  And even in this convergence argument, invariance only shows up in the critical estimate $\cost(G_n) - \cost(G_{n+1}) \geq \mu(B_n)$.

\begin{definition}
  Suppose that $G$ is a locally finite Borel graph on standard Borel $X$, that $(X_n)_{n \in \N}$ is a sequence of $G$-independent Borel sets so that each $x\in X$ is in infinitely many $X_n$.  We define the \emph{flip sequence} $(c_n)_{n \in \N}$ of Borel functions from $X$ to $2$ as follows:
  \begin{itemize}
    \item
    $c_0$ is the constant $0$ function,

    \item
    $c_{n+1}(x) = 1-c_n(x)$ if $x \in X_n$ and $|\{y \in G_x \suchthat c_n(x) \neq c_n(y)\}| < |\{y \in G_x \suchthat c_n(x) = c_n(y)\}|$; otherwise, $c_{n+1}(x) = c_n(x)$.
  \end{itemize}
\end{definition}

\begin{definition}
  Given a locally finite Borel graph $G$ on $X$ and a sequence $(X_n)_{n \in \N}$ of repetitive independent sets as above, we say that a Borel measure $\mu$ on $X$ is \emph{compatible} with $G$ and $(X_n)$ if the corresponding flip sequence $c_n$ converges on a $\mu$-conull set.
\end{definition}

The proof of Theorem \ref{thm:invariant} shows that whenever $\mu$ is a $G$-invariant Borel probability measure with respect to which the average degree of $G$ is finite, then $\mu$ is compatible with every sequence of independent sets.  We seek to weaken the invariance assumption when $G$ has bounded degree.

\begin{proposition}\label{prop:quasiinv}
  Suppose that $G$ is a Borel graph on $X$ with bounded degree $d$, and that $\mu$ is a $G$-quasi-invariant Borel probability measure with corrsponding Radon-Nikodym cocycle $\rho$.  Suppose further that for all $(x,y) \in G$, $1-\frac{1}{d} \leq \rho(x,y) \leq 1+\frac{1}{d}$.  Then $\mu$ is compatible with every repetitive sequence of independent sets.
\end{proposition}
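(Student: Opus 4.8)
The plan is to run the argument of Theorem~\ref{thm:invariant} essentially verbatim, replacing the invariance-based inequality $\cost(G_n)-\cost(G_{n+1})\ge\mu(B_n)$ by a version that uses the cocycle $\rho$. Fix a repetitive sequence $(X_n)$ of $G$-independent sets, let $(c_n)$ be the associated flip sequence, and put $B_n=\{x\suchthat c_n(x)\ne c_{n+1}(x)\}\subseteq X_n$ and $G_n=\{(x,y)\suchthat x\mathrel{G}y\text{ and }c_n(x)=c_n(y)\}$, exactly as there. Since $G$ has bounded degree $d$ and $\mu$ is a probability measure, the potential $\Psi_n:=\int_X|G_{n,x}|\,d\mu(x)$ takes values in $[0,d]$, and $\Psi_0\le d$ because $c_0\equiv 0$ makes $G_0=G$. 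So it suffices to establish the estimate $\Psi_n-\Psi_{n+1}\ge\mu(B_n)$: then $\sum_n\mu(B_n)\le\Psi_0\le d<\infty$, so by Borel--Cantelli the set $C=\{x\suchthat x\in B_n\text{ for infinitely many }n\}$ is $\mu$-null, and on the $\mu$-conull set $X\setminus C$ the sequence $(c_n)$ stabilizes pointwise, hence converges, witnessing compatibility.

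To prove the estimate, for $x\in X$ write $a_n(x)=|\{y\in G_x\suchthat c_n(x)=c_n(y)\}|$ and $b_n(x)=|\{y\in G_x\suchthat c_n(x)\ne c_n(y)\}|$. Because $B_n\subseteq X_n$ is $G$-independent, within $\{x\}\cup G_x$ the only vertices whose colour changes between $c_n$ and $c_{n+1}$ are $x$ itself (when $x\in B_n$) or the neighbours of $x$ lying in $B_n$ (when $x\notin B_n$) — never both a vertex and one of its neighbours. From this one reads off directly that (i) if $x\in B_n$ then $|G_{n,x}|-|G_{n+1,x}|=a_n(x)-b_n(x)\ge 1$, the inequality being the defining condition for membership in $B_n$; and (ii) if $x\notin B_n$ then $|G_{n,x}|-|G_{n+1,x}|=p_n(x)-q_n(x)$, where $p_n(x)=|\{y\in G_x\cap B_n\suchthat c_n(x)=c_n(y)\}|$ and $q_n(x)=|\{y\in G_x\cap B_n\suchthat c_n(x)\ne c_n(y)\}|$ (both being $0$ for $x\in B_n$, since then $G_x\cap B_n=\emptyset$). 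Integrating,
\[
\Psi_n-\Psi_{n+1}=\int_{B_n}\big(a_n(x)-b_n(x)\big)\,d\mu(x)+\int_X\big(p_n(x)-q_n(x)\big)\,d\mu(x).
\]

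The new point is to evaluate the second integral using quasi-invariance in place of invariance. The set $D^+=\{(x,y)\suchthat x\in B_n,\ y\in G_x,\ c_n(x)=c_n(y)\}$ is a Borel subset of $G$ whose underlying undirected graph has degree $\le d$ and which, by independence of $B_n$, is oriented away from $B_n$; a finite Borel proper edge-colouring (a standard consequence of bounded degree, e.g.\ applying \cite{kechris1999borel} to the line graph) thus decomposes it into finitely many graphs of Borel partial injections. Applying the Radon--Nikodym identity $\mu(f[A])=\int_A\rho(x,f(x))\,d\mu$ to each piece and summing, and noting that $x\mapsto|\{z\suchthat(z,x)\in D^+\}|=p_n(x)$, we get $\int_X p_n\,d\mu=\int_{B_n}\sum_{y\in G_x,\,c_n(x)=c_n(y)}\rho(x,y)\,d\mu(x)$; the same computation with $D^-=\{(x,y)\suchthat x\in B_n,\ y\in G_x,\ c_n(x)\ne c_n(y)\}$ gives $\int_X q_n\,d\mu=\int_{B_n}\sum_{y\in G_x,\,c_n(x)\ne c_n(y)}\rho(x,y)\,d\mu(x)$. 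Substituting into the displayed identity and then bounding $\rho(x,y)\ge1-\tfrac1d$ on the monochromatic sum and $\rho(x,y)\le1+\tfrac1d$ on the bichromatic sum, the integrand over $B_n$ is at least
\[
\big(a_n(x)-b_n(x)\big)+a_n(x)\Big(1-\tfrac1d\Big)-b_n(x)\Big(1+\tfrac1d\Big)=2\big(a_n(x)-b_n(x)\big)-\tfrac1d\big(a_n(x)+b_n(x)\big)\ge 2-1=1,
\]
using $a_n(x)-b_n(x)\ge1$ and $a_n(x)+b_n(x)=|G_x|\le d$. Hence $\Psi_n-\Psi_{n+1}\ge\mu(B_n)$, completing the argument.

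I expect the only delicate points to be the combinatorial bookkeeping in step (ii) — checking, via independence of $B_n$, that each edge is accounted for exactly once and that no further colour changes occur in $\{x\}\cup G_x$ — together with the routine (but worth stating) passage from the Radon--Nikodym identity for a single partial injection to the identity for $D^{\pm}$ through a finite Borel edge-colouring. Everything else is identical to Theorem~\ref{thm:invariant}, and the hypothesis $1-\tfrac1d\le\rho\le1+\tfrac1d$ enters exactly once, in the final displayed line, precisely to absorb the potentially adverse term $\tfrac1d\big(a_n(x)+b_n(x)\big)\le 1$ that here replaces the clean factor of $2$ available in the measure-preserving case.
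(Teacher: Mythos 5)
Your proposal is correct and follows essentially the same route as the paper: both reduce to the estimate $\Psi_n-\Psi_{n+1}\ge\mu(B_n)$ for the potential $\Psi_n=\int_X|(G_n)_x|\,d\mu$ and obtain it by charging each flipped edge to its unique endpoint in $B_n$, using the cocycle bounds to show each such edge contributes between $2-\tfrac1d$ and $2+\tfrac1d$, so that the integrand over $B_n$ is $2(a_n-b_n)-\tfrac1d(a_n+b_n)\ge1$ — exactly the paper's $2(r-s)-\epsilon(r+s)\ge 2-d\epsilon=1$. Your write-up merely makes explicit (via the Lusin--Novikov/edge-colouring decomposition into partial injections) the ``double counting'' transfer that the paper asserts in one line.
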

Theorem~\ref{thm:quasiinvariant} is an immediate consequence of this proposition.
\begin{proof}[Proof of Proposition~\ref{prop:quasiinv}]
  Put $\epsilon = \frac{1}{d}$.  Define a measure $M$ on $G$ by putting for all Borel $H \subseteq G$,
  $$
  M(H) = \int_X |H_x|\ d\mu
  $$
  This new measure $M$ will replace the occurrences of cost in the proof of Theorem \ref{thm:invariant}.

  Consider the flip sequence $c_n$, and define corresponding graphs $G_n \subseteq G$ by $x \mathrel{G_n} y$ iff $x \mathrel{G} y$ and $c_n(x) = c_n(y)$.  As before, let $B_n$ denote those $x \in X_n$ for which $c_{n+1}(x) \neq c_n(x)$.  Note that the ``double counting'' that occurred in the proof of Theorem \ref{thm:invariant} may no longer be true double counting, but the bound on $\rho$ ensures that each edge is counted at most $(2+\epsilon)$ times and at least $(2-\epsilon)$ times.

  \begin{claim}
  $M(G_n) - M(G_{n+1}) \geq \mu(B_n)$
  \end{claim}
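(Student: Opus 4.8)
The plan is to run the double-counting bookkeeping of the proof of Theorem~\ref{thm:invariant}, but to pay for the failure of exact double counting with the cocycle bound $1-\epsilon \leq \rho \leq 1+\epsilon$. Write $a_n(x) = |G_{n,x}|$ for the number of $G$-neighbors of $x$ having the same $c_n$-color as $x$, and $b_n(x) = |G_x| - a_n(x)$ for the number having the opposite color, so that $x \in B_n$ precisely when $x \in X_n$ and $a_n(x) > b_n(x)$, and $M(G_n) = \int_X a_n\,d\mu$. The first step is to evaluate $a_n(x) - a_{n+1}(x)$ pointwise. Since $B_n \subseteq X_n$ is $G$-independent, no vertex of $B_n$ is adjacent to another vertex of $B_n$. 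Hence if $x \in B_n$ then $x$ flips while none of its neighbors do, so the colors same/opposite to $x$ swap among $G_x$ and $a_{n+1}(x) = b_n(x)$, giving $a_n(x) - a_{n+1}(x) = a_n(x) - b_n(x) \geq 1$. If $x \notin B_n$ then $x$ keeps its color; setting $P(x) = \{y \in G_x \cap B_n : c_n(y) = c_n(x)\}$ and $Q(x) = \{y \in G_x \cap B_n : c_n(y) \neq c_n(x)\}$ (both empty when $x \in B_n$, by independence), a one-line count of which neighbors contribute to $a_{n+1}(x)$ yields $a_{n+1}(x) = a_n(x) - |P(x)| + |Q(x)|$, hence $a_n(x) - a_{n+1}(x) = |P(x)| - |Q(x)|$.

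The second step is to convert $\int_X |P|\,d\mu$ and $\int_X |Q|\,d\mu$ into integrals over $B_n$ using quasi-invariance. Let $\vec{E}^{=} = \{(w,z) \in G : w \in B_n,\ c_n(w) = c_n(z)\}$ be the Borel set of oriented edges leaving $B_n$ between equal-color endpoints, and $\vec{E}^{\neq}$ the analogous set for opposite-color endpoints. Each has all sections of size at most $d$ in both coordinates, so by Lusin-Novikov, refined in the standard way to partial injections, we may write $\vec{E}^{=} = \bigsqcup_i \graph(f_i)$ with the $f_i \from A_i \to X$ Borel partial injections and $\graph(f_i) \subseteq G$. Counting sections on the tail side gives $\int_{B_n} a_n\,d\mu = \sum_i \mu(A_i)$, while counting on the head side gives $\int_X |P|\,d\mu = \sum_i \mu(f_i[A_i]) = \sum_i \int_{A_i} \rho(x,f_i(x))\,d\mu(x)$ by the defining property of the Radon-Nikodym cocycle. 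The bound $1-\epsilon \leq \rho \leq 1+\epsilon$ then gives $(1-\epsilon)\int_{B_n} a_n\,d\mu \leq \int_X |P|\,d\mu \leq (1+\epsilon)\int_{B_n} a_n\,d\mu$, and likewise $(1-\epsilon)\int_{B_n} b_n\,d\mu \leq \int_X |Q|\,d\mu \leq (1+\epsilon)\int_{B_n} b_n\,d\mu$ from $\vec{E}^{\neq}$.

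Combining the two steps (and using that $P,Q$ are supported off $B_n$):
\[
M(G_n) - M(G_{n+1}) = \int_{B_n}(a_n - b_n)\,d\mu + \int_X(|P| - |Q|)\,d\mu \geq \int_{B_n}\bigl[(2-\epsilon)\,a_n - (2+\epsilon)\,b_n\bigr]\,d\mu .
\]
It remains to verify the pointwise inequality $(2-\epsilon)\,a_n(x) - (2+\epsilon)\,b_n(x) \geq 1$ for every $x \in B_n$, which is exactly where the value $\epsilon = 1/d$ is used and the bound is essentially tight. On $B_n$ we have the integer constraints $a_n(x) \geq b_n(x) + 1$ and $a_n(x) + b_n(x) \leq d$; the left side increases with $a_n$ and, once $a_n = b_n + 1$, decreases with $b_n$, so its minimum is attained at $b_n = \lfloor (d-1)/2 \rfloor$ and equals $1$ if $d$ is odd and $1 + 1/d$ if $d$ is even. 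Hence $M(G_n) - M(G_{n+1}) \geq \mu(B_n)$, as claimed.

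I expect the main obstacle to lie in the second step: setting up the two oriented sub-relations and verifying that the Lusin-Novikov decomposition genuinely converts the ``tail'' and ``head'' counts of edges incident to $B_n$ into integrals differing only by a factor of $\rho$, while keeping the same/opposite-color split ($P$ versus $Q$, $a_n$ versus $b_n$) straight across the reversal. The concluding arithmetic, although it pins down the precise hypothesis on $\rho$, is elementary once this reduction is in hand.
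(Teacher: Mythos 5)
Your proof is correct and takes essentially the same route as the paper: both arguments reduce to the pointwise estimate $(2-\epsilon)a_n(x) - (2+\epsilon)b_n(x) \geq 1$ on $B_n$ after weighting each edge incident to $B_n$ by a factor between $2-\epsilon$ and $2+\epsilon$ relative to the measure at its $B_n$-endpoint. The paper leaves that weighting step informal (``each edge is counted at most $(2+\epsilon)$ times and at least $(2-\epsilon)$ times''), and your Lusin--Novikov decomposition into partial injections, together with the $P/Q$ bookkeeping, is precisely the rigorous justification of that line.
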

  \begin{proof}[Proof of the claim]
    Partition $B_n$ into finitely many Borel sets $A_{r,s}$ where $x \in A_{r,s}$ iff $x$ has $r$-many $G_n$ neighbors and $s$-many $G_{n+1}$ neighbors (so $r > s$ and $r+s \leq d$).  We compute
    \begin{align*}
      M(G_n)-M(G_{n+1}) &= \int_X |(G_n)_x| - |(G_{n+1})_x|\ d\mu\\
      &\geq \int_{B_n} (2-\epsilon)|(G_n)_x| - (2+\epsilon)|(G_{n+1})_x|\ d\mu\\
      &=\sum_{r,s} \int_{A_{r,s}} (2-\epsilon)r - (2+\epsilon)s\ d\mu\\
      &=\sum_{r,s} \int_{A_{r,s}} 2(r-s) -\epsilon(r+s)\ d\mu\\
      &\geq \sum_{r,s} \int_{A_{r,s}} 2 - d\epsilon\ d\mu\\
      &= \mu(B_n)
    \end{align*}
    as required.
  \end{proof}
  The remainder of the argument is as in the proof of Theorem \ref{thm:invariant}.
\end{proof}

Given Proposition~\ref{prop:quasiinv}, the proof of Theorem~\ref{thm:subexp} is straightforward.

\begin{proof}[Proof of Theorem~\ref{thm:subexp}]
  Fix a degree bound $d$ for $G$ and put $\epsilon = \frac{1}{d}$.  It suffices to construct for each $x \in X$ a $G$-quasi-invariant Borel probability measure $\mu_x$ whose Radon-Nikodym cocycle is $\epsilon$-bounded on $G$ such that $\mu_x(\{x\})>0$.  If we do so, Proposition \ref{prop:quasiinv} ensures that the flip sequence $c_n$ converges $\mu_x$-everywhere for each $x$, and thus it converges everywhere.  The limit is then an unfriendly coloring by the same argument as in the final claim in the proof of Theorem \ref{thm:invariant}.

  To construct $\mu_x$, first define a purely atomic measure $\nu_x$ supported on the $G$-component of $x$ by declaring $\nu_x(\{y\}) = (1+\epsilon)^{-\delta(x,y)}$, where $\delta$ denotes the graph metric.  Subexponential growth of $G$ ensures that $K = \sum_{y \in [x]_G} \nu_x(\{y\}) < \infty$.  Finally, put $\mu_x = \frac{1}{K} \nu_x$.
\end{proof}

\section*{Acknowledgments}
We thank the anonymous referee for insightful comments. We also thank Alekos Kechris for organizing the seminar that inspired this paper.

\bibliography{refs}

\end{document}